\documentclass[aap]{imsart}

\RequirePackage{amsthm,amsmath,amsfonts,amssymb,mathtools,comment,color,enumitem}
\RequirePackage[authoryear]{natbib}
\usepackage{xcolor}
\usepackage[
    colorlinks=true,
    linkcolor=blue,
    citecolor=blue,
    urlcolor=blue
]{hyperref}

\startlocaldefs

\newcommand\abs[1]{\left\lvert#1\right\rvert}
\newcommand\prs[1]{\left(#1\right)}
\newcommand\sbk[1]{\left[#1\right]}

\newcommand{\gti}{\rightarrow\infty}

\newcommand{\R}{\mathbb{R}}

\newcommand{\E}{\mathbb{E}}

\theoremstyle{plain}
\newtheorem{theorem}{Theorem}
\newtheorem{lemma}{Lemma}
\newtheorem{proposition}{Proposition}
\newtheorem{corollary}{Corollary}
\theoremstyle{remark}
\newtheorem{definition}{Definition}

\endlocaldefs

\begin{document}

\begin{frontmatter}
\title{A Harris recurrent continuous-time Markov process without wide-sense regenerative structure}
\runtitle{Harris recurrence without wide-sense regeneration}

\begin{aug}
\author[1]{\fnms{Yanlin} \snm{Qu}\ead[label=e1,mark]{quyanlin@cuhk.edu.cn}}
\and
\author[2]{\fnms{Peter} \snm{Glynn}\ead[label=e2,mark]{glynn@stanford.edu}}
\address[1]{School of Data Science, The Chinese University of Hong Kong, Shenzhen}
\address[2]{Department of Management Science and Engineering, Stanford University}
\address{\printead{e1,e2}}
\end{aug}

\begin{abstract}
While Harris recurrent Markov chains (in discrete time) automatically exhibit wide-sense regenerative structure, we construct a Harris recurrent Markov process (in continuous time) that is not wide-sense regenerative, thereby giving a negative answer to the open problem first raised in the 1990s and later posed in \cite{glynn2011wide}.
The counterexample exhibits the following rigidity property: every almost surely finite random time that is independent of the state observed at that time must be almost surely constant. A Cantor set linearly independent over the rationals plays a key role in the construction, turning calendar time into an algebraic record of the path already traversed.
\end{abstract}

\begin{keyword}[class=MSC2020]
\kwd{60J25}
\end{keyword}

\begin{keyword}
\kwd{Markov processes}
\kwd{Wide-sense regeneration}
\kwd{Harris recurrence}
\kwd{Cantor sets}
\end{keyword}

\end{frontmatter}

\section{Introduction}
Harris recurrence \citep{harris1956existence} provides a useful tool for the long-run analysis of Markov models, supporting the existence of invariant measures, laws of large numbers, and central limit theorems. In discrete time, Harris recurrent Markov chains automatically exhibit wide-sense regenerative structure \citep{athreya1978new,nummelin1978splitting}, bringing the full arsenal of renewal theory to bear on questions of convergence to equilibrium. In continuous time, however, whether Harris recurrent Markov processes are automatically wide-sense regenerative has long remained open, as described in \citet{glynn2011wide}. The question is not only of theoretical interest but also of practical importance. In particular, when Markov processes are specified through their infinitesimal generators, establishing Harris recurrence is often substantially easier than verifying sufficient conditions for wide-sense regeneration.

We answer the question in the negative by constructing a Harris recurrent Markov process that is not wide-sense regenerative. In fact, the counterexample satisfies a stronger ``rigidity'' property: every almost surely finite random time that is independent of the state observed at that time must be almost surely constant, which provides the basic obstruction to wide-sense regeneration.

\textbf{From recurrence to regeneration.} Let $X=(X(t):t\geq 0)$ be a (time-homogeneous) strong Markov process on a separable metric space $\mathcal{S}$ (endowed with its Borel $\sigma$-algebra $\mathcal{B}$), with right-continuous paths having left limits, and let $P_x$ denote its law when $X(0)=x$. For $t\geq0$, let $\Theta_t X=(X(t+s):s\geq0)$ denote the process shifted by $t$.
\begin{definition}[Harris recurrence]
\label{def:harris}
The process $X$ is said to be \textit{Harris recurrent} if there exists a non-trivial $\sigma$-finite measure $\eta$ on $\mathcal{S}$ for which $\eta(A)>0$ implies that
\[
\int_0^\infty I(X(t)\in A)dt=\infty,\;\;\;\;P_x\text{-a.s.},
\]
for each $x\in\mathcal{S}$.
\end{definition}
\begin{definition}[Wide-sense regeneration]
\label{def:regeneration}
The process $X$ is said to be \textit{wide-sense regenerative} if there exist randomized stopping times $0\leq R_0<R_1<\dots<\infty$ such that:
\begin{enumerate}[label=(\roman*)]
    \item for each $n\geq0$, $\Theta_{R_n}X$ is independent of $R_n$;
    \item $\Theta_{R_n}X$ is identically distributed in $n$.
\end{enumerate}
\end{definition}
A standard route from recurrence to regeneration is through a minorization condition. Suppose there exist a set $K\in\mathcal{B}$ with $\eta(K)>0$, constants $h,\lambda>0$, and a probability measure $\phi$ on $\mathcal{S}$ such that
\[
P_x(X(h)\in B)\geq\lambda\phi(B),\;\;\;\;x\in K,\;\;B\in\mathcal{B}.
\]
Whenever $X$ visits $K$, with probability $\lambda$, it ``regenerates'' after $h$ units of time according to the distribution $\phi$, so recurrent visits lead to a sequence of regeneration times. In discrete time, the required minorization can be established for Harris recurrent Markov chains via a measure-differentiation argument; see, e.g., pp.~103--105 of \cite{meyn2012markov}. However, this argument can fail in continuous time; see \cite{glynn2011wide} for an example illustrating this subtlety.

\textbf{Counterexample in continuous time.}
We are now ready to state our main result, which gives a negative answer to the open problem:
\[
    \textit{Does every Harris recurrent Markov process necessarily exhibit wide-sense regeneration?}
\]
\begin{theorem}
\label{thm_main}There exist a compact separable metric space $\mathcal{S}$ and a time-homogeneous strong Markov process $X$ on $\mathcal{S}$, with right-continuous paths having left limits, such that $X$ is Harris recurrent but, for every initial state $x\in\mathcal{S}$, $X$ under $P_x$ is not wide-sense regenerative.
\end{theorem}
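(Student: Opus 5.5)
We build a process whose state, together with the calendar time, determines the entire past. Let $C\subset[0,1]$ be a Cantor set whose points are linearly independent over $\mathbb{Q}$; such sets exist by a Mycielski/Kuratowski-type perfect-set argument. The process moves deterministically along a compact ``loop'' state space: starting from a base point, it chooses a holding duration (or excursion length) $\xi\in C$ at random with a non-atomic law on $C$. It travels for time $1+\xi$, with the position recording the fraction of the excursion completed, and then returns to the base point and repeats with a fresh independent choice. The state records $\xi$ and the current phase, so $\mathcal{S}=\{(c,u): c\in C,\ u\in[0,1]\}$ with the endpoints $u=1$ and $u=0$ glued to a single base point. This space is compact and separable, and the process is strong Markov with càdlàg paths because the dynamics are deterministic flow plus an independent choice at the base point. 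Harris recurrence with $\eta$ equal to (law of $\xi$) $\otimes$ Lebesgue on the phase follows because the i.i.d. excursions form a renewal process with finite mean cycle length. By the strong law, each positive-$\eta$ set receives infinite occupation time from every starting point.

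\textbf{Rigidity.} At any time $t$ the state is $(\xi_N,u)$, where $N$ is the current cycle index. The time then satisfies $t = t_0 + N + \sum_{k<N}\xi_k + u(1+\xi_N)$, where $t_0$ is the deterministic time to first hit the base point from $x$. By rational linear independence, each number $t - t_0 - u(1+\xi_N)$ admits at most one representation of the form $n+\sum_{c}m_c c$ with nonnegative integer multiplicities on distinct points of $C$. Because the $\xi_k$ are a.s. distinct, this means $t$ and the state together determine $N$ and the multiset $\{\xi_0,\dots,\xi_{N-1}\}$.

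Now suppose $T$ is an a.s. finite random time with $T$ independent of $X(T)$, possibly on an enlarged probability space. We aim to show that $T$ is a.s. constant. Conditional on $X(T)=(c,u)$, the law of $T$ must not depend on $(c,u)$. The key step, and the main obstacle, is to make this precise. Since $T$ lies in the countable set of translates $\{t_0+n+\sum m_c c\}$ shifted by $u(1+c)$, the law of $T$ would have to be supported in $\bigcap_{(c,u)} \bigl(u(1+c)+G\bigr)$ for a set of $(c,u)$ of full measure, where $G$ is the countable additive structure generated by the past. I would argue by conditioning on the past multiset. For $T$'s law to be independent of the current $\xi_N=c$, whose conditional law is non-atomic and which enters $T$ through the term $u(1+c)$, two things must hold. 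First, $u$ must be fixed, and $u=0$ is forced, since otherwise $T$ inherits a continuous, $c$-dependent component. Second, the coincidences $\sum_{k<N}\xi_k = \sum_{k<N'}\xi'_k$ across different pasts must be ruled out by independence: two pasts giving the same time must coincide as multisets. From this one deduces that $T$ equals a fixed value $t_0+n+\sum_{k<n}\xi_k$ and that the randomness of the $\xi_k$ forces $n=0$, so that $T=t_0$ when $x$ is not the base point. I expect handling the randomization and null sets carefully, via disintegration, to be the hardest part.

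\textbf{Conclusion.} Given rigidity, Definition~\ref{def:regeneration} fails. Each $R_n$ must be a.s. constant, say $r_n$, with $r_0<r_1$ and $\Theta_{r_0}X\overset{d}{=}\Theta_{r_1}X$. Under $P_x$, however, the law of the path from a fixed deterministic time $r$ is determined by $r$, and the time-encoding shows that this law differs for distinct $r$. Concretely, the distribution of the total elapsed relation between $r$ and the first base-point hitting time after $r$ differs for distinct $r$, which gives the contradiction.
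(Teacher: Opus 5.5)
\textbf{There is a fatal gap: the process you construct is regenerative, so the rigidity property you aim for is false for it.} Let $R_n=t_0+n+\sum_{k<n}\xi_k$, $n\geq0$, be the successive visits to the base point. These are stopping times. $\Theta_{R_n}X$ is a function of the fresh choices $(\xi_n,\xi_{n+1},\dots)$ alone, so it has law $P_{\mathrm{base}}$ for every $n$. It is also independent of $R_n$, which depends only on $\xi_0,\dots,\xi_{n-1}$. That is exactly Definition~\ref{def:regeneration}, and in fact classical regeneration.

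In particular, $T=R_1=t_0+1+\xi_0$ is non-constant, yet $X(T)$ is the base point, a deterministic state, so $T$ is trivially independent of $X(T)$. The step that fails is ``the randomness of the $\xi_k$ forces $n=0$''. Since $X(T)$ carries no information about $\xi_0,\dots,\xi_{N-1}$, randomness in the encoded past is perfectly compatible with independence. Rational independence makes calendar time an algebraic record of the past, but that is no obstruction unless the observed state is tied to that record.

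Separately, gluing all endpoints to one point and choosing $\xi$ there with continuous paths makes $\xi_0$ measurable with respect to $\bigcap_{\epsilon>0}\sigma(X(s):s\leq\epsilon)$ under $P_{\mathrm{base}}$, while $\xi_0$ is non-degenerate. This is incompatible with the Markov property for the usual augmented filtration. The standard remedy, used in the paper, is to jump to a state that already records the new choice, here $(\xi_{\mathrm{new}},0)$. That does not rescue you: then $X(R_n)=(\xi_n,0)$ is still independent of $R_n$.

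What is missing is a mechanism that entangles the observed state with the past that calendar time encodes. In the paper, the edge lengths $l(u,v)$ depend on both endpoints. The state $(u,v,z)$ records the current edge, whose initial vertex $\beta(X(T))$ is the terminal vertex of the last completed edge. So if, given $Y=X(T)$, you draw two conditionally independent copies of the pair (time, completed path), both paths run from $b$ to $\beta(Y)$. The current-edge term $\alpha(Y)$ then cancels, and $D=T_1-T_2$ is the length difference of two simple paths with common endpoints.

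Lemma~\ref{lem_decode} reads off from $D\neq0$ a non-initial vertex $c(D)$ common to both paths. This $c(D)$ is independent of $Y$ because $T$ is. Lemma~\ref{lem_collision}, which uses that $\mu$ is atomless, shows that such a shared vertex has probability zero. Hence $D=0$ and $T$ is constant. Corollary~\ref{cor_neq} (mixing two deterministic times with an independent coin) then rules out constant regeneration times.

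A renewal-type model whose state records only the current cycle has no shared endpoint to exploit. No argument of this kind can succeed there, because the process genuinely regenerates at cycle boundaries.
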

This $X$ can be viewed as a process moving on an uncountable directed graph. The vertex space is $\mathcal{V}=\{0,1\}^\mathbb{N}$, the space of infinite binary sequences, with a directed edge from $u$ to $v$ for every $u,v\in \mathcal{V}$. Each directed edge $(u,v)$ is assigned a distinct length $l(u,v)\in[1,2]$ from a Cantor set linearly independent over the rationals\footnote{The existence of perfect sets with stronger algebraic independence properties goes back to \cite{neumann1928system}.}, i.e., no nontrivial rational linear combination of finitely many distinct edge lengths can vanish. The process moves along the edges at unit speed. Upon reaching the next vertex, it samples a new vertex in an iid manner (i.e., the vertex sequence is iid), and starts moving along the corresponding new edge.
At any time, the state records the edge currently being traversed and the position of the process along that edge, giving a Markovian description of the dynamics. The iid vertex sequence makes the Markov process Harris recurrent, while the linearly independent (over the rationals) edge lengths make calendar time retain algebraic information about the path already traversed, thereby preventing wide-sense regeneration.

The remainder of the paper is organized as follows. In Section \ref{sec_counter}, we construct the counterexample in detail. In Section \ref{sec_harris}, we show that the process is Harris recurrent. In Section \ref{sec_regen}, we show that the process is not wide-sense regenerative. In Section \ref{sec_cantor}, for completeness, we provide an elementary construction of the Cantor set used in the counterexample. In Section \ref{sec_details}, we collect the remaining technical details.

\textbf{Use of AI.}
This work was developed in collaboration with GPT-5.6 Sol Pro. In particular, the authors thank GPT for bringing the rationally independent Cantor set to their attention, which lit the way to resolving the open problem. Starting from GPT's attempted resolution, the authors checked and corrected the arguments line by line, taking responsibility for all claims in the paper.

\section{Counterexample Construction}
\label{sec_counter}
Let \[\mathcal{V}=\{0,1\}^\mathbb{N}\] be endowed with the product topology.
Note that $\mathcal{V}^2$ is homeomorphic to $\mathcal{V}$ via coordinate interleaving
\[
\mathcal{V}^2\rightarrow \mathcal{V},\;\;\;\;((u_1,u_2,\dots),(v_1,v_2,\dots))\mapsto(u_1,v_1,u_2,v_2,\dots).
\]
Let \[\mu=\bigotimes_{k\geq1}\frac{\delta_0+\delta_1}{2}\]
be the probability measure under which the binary coordinates are iid and equally likely to be zero or one. 
Note that $\mu$ is atomless as the probability of any prescribed infinite binary sequence is zero. 

Starting from $[1,2]$, we construct the standard middle-third Cantor set $C$ by repeatedly removing the open middle third of each remaining interval. By Theorem 1 of \cite{mycielski1964independent}, there exists a Cantor subset $L\subset C$ that is linearly independent over the rationals.
That is, for every $k\geq1$, every collection of distinct points
$r_1,\dots,r_k\in L$, and every $q_1,\dots,q_k\in\mathbb Q$,
\[
    \sum_{i=1}^k q_i r_i=0
    \;\;\Rightarrow\;\;
    q_1=\dots=q_k=0.
\]
For completeness, we provide an elementary construction of such a set in Section \ref{sec_cantor}.

Since $\mathcal{V}^2$ and $L$ are both homeomorphic to the standard Cantor set, there exists a homeomorphism
\[
l: \mathcal{V}^2\rightarrow L,
\]
e.g., by selecting (left or right) intervals according to the interleaved binary sequence.
For $u,v\in \mathcal{V}$, we assign $l(u,v)\in[1,2]$ as the length of the directed edge from $u$ to $v$. Since $l$ is one-to-one, distinct directed edges have distinct lengths. Most importantly, no nontrivial rational linear combination of finitely many distinct edge lengths can vanish.

With the graph in place, we now describe the dynamics of the counterexample.
This process moves along the edges at unit speed. Upon reaching the next vertex, it samples a new vertex according to $\mu$, independently of the past, and starts moving along the corresponding new edge.
To give a Markovian description of the dynamics, the state records the edge currently being traversed and the (normalized) position of the process along that edge.
Let $\mathbb{T}=\mathbb{R}/\mathbb{Z}$ be the unit circle. For $z\in\mathbb{T}$, let $\bar{z}\in[0,1)$ be its canonical representative, and write $0_\mathbb{T}$ for the equivalence class of $0$.
Take
\[
\mathcal{S}=\mathcal{V}^2\times\mathbb{T}
\]
as the state space. 
Since both $\mathcal{V}$ and $\mathbb{T}$ are compact separable metric spaces, so is $\mathcal{S}$.
A state $x=(u,v,z)\in\mathcal{S}$ means that the process is traversing the edge $(u,v)$, with a fraction $\bar{z}$ of the edge already traversed. Thus the remaining time on the current edge is
\[
r(x)=(1-\bar{z})l(u,v).
\]
Starting from $x=(u,v,z)$, for $t\in[0,r(x))$, the process evolves deterministically according to
\[
X(t)=\prs{u,v,\sbk{\bar z+\frac{t}{l(u,v)}}_{\mathbb T}},
\]
where $[\cdot]_\mathbb{T}$ denotes reduction modulo $1$. At time $r(x)$, the process jumps to 
\[
X(r(x))=(v,w,0_\mathbb{T}),
\]
where $w$ is sampled from $\mu$, independently of the past. The same rule is then repeated.

The above construction defines a time-homogeneous Markov process with c\`adl\`ag sample paths. 
In fact, the process is strong Markov, the verification of which is deferred to the end of Section \ref{sec_details}.
The process is nonexplosive: after the first transition, every edge traversal takes at least one unit of time, so only finitely many transitions can occur on any bounded time interval.

\section{Harris Recurrence}
\label{sec_harris}
We now verify that the process constructed in Section \ref{sec_counter} is Harris recurrent. Define a measure $\eta$ on $\mathcal{S}$ by
\[
\eta(A)=\int_\mathcal{V}\int_\mathcal{V} g_A(u,v)\mu(du)\mu(dv),\;\;\;\;A\in\mathcal{B},
\]
where
\[
g_A(u,v)=\int_0^{l(u,v)}I\prs{\prs{u,v,\sbk{\frac{t}{l(u,v)}}_\mathbb{T}}\in A}dt.
\]
Thus, $\eta$ is the expected occupation measure during a complete traversal of an edge where the two endpoints are sampled independently from $\mu$. It is a non-trivial finite measure with
\[
\eta(\mathcal{S})=\int_\mathcal{V}\int_\mathcal{V} l(u,v)\mu(du)\mu(dv)\in[1,2].
\]
\begin{proposition}
\label{prop_harris}
The process $X$ is Harris recurrent with respect to $\eta$.
\end{proposition}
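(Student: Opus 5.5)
Fix $A\in\mathcal{B}$ with $\eta(A)>0$ and an initial state $x=(u_0,v_0,z_0)$. The plan is to reduce the continuous-time occupation integral to a sum over complete edge traversals, and then apply the strong law of large numbers to an iid sequence. Let $\tau_0=r(x)$ be the first jump time, and let $\tau_{n}=\tau_{n-1}+l(V_{n},V_{n+1})$ for $n\geq1$. Here $V_1=v_0$ and $V_2,V_3,\dots$ are the iid $\mu$-distributed vertices sampled at successive jumps, independent of each other. On $[\tau_{n-1},\tau_n)$ the process traverses the edge $(V_n,V_{n+1})$ completely and deterministically. Hence
\[
\int_0^\infty I(X(t)\in A)\,dt\;\geq\;\sum_{n\geq 2} g_A(V_n,V_{n+1}).
\]
I drop the first (partial) traversal and the first complete one so that both endpoints are $\mu$-distributed.

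The summands $Y_n=g_A(V_n,V_{n+1})$, $n\geq2$, are nonnegative and bounded by $2$. Each has mean $\E Y_n=\int\int g_A\,d\mu\,d\mu=\eta(A)>0$, using the product independence of $V_n$ and $V_{n+1}$. They are not independent, but they form a $1$-dependent stationary sequence. I therefore split into even and odd indices: $(Y_{2k})_{k\geq1}$ and $(Y_{2k+1})_{k\geq1}$ are each iid, since they involve disjoint pairs of independent vertices. Applying the strong law of large numbers to the even subsequence gives $\frac1m\sum_{k=1}^m Y_{2k}\to\eta(A)>0$ almost surely. Hence $\sum_n Y_n=\infty$ $P_x$-a.s., and the occupation integral diverges. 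A second route, should one prefer it, is Borel--Cantelli: $P(Y_{2k}>\epsilon)>0$ for some $\epsilon>0$, and these events are independent.

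Measurability of $g_A$ in $(u,v)$ has to be checked so that $\eta$ and $Y_n$ make sense. It follows from the joint measurability of $(u,v,t)\mapsto I((u,v,[t/l(u,v)]_\mathbb{T})\in A)$, which holds because $l$ is continuous, together with Fubini. It also has to be checked that the construction realizes $V_2,V_3,\dots$ as iid $\mu$ under $P_x$ for every $x$, which is immediate from the description of the dynamics. Finally, $\eta$ is nontrivial and finite, as already noted. I expect no serious obstacle. The only step needing care is handling the dependence between consecutive summands, and the even/odd split resolves it.
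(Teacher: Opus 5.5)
Your proposal is correct and matches the paper's proof. Both bound the occupation time below by a sum of $g_A$ over complete edge traversals, restrict to traversals built from disjoint pairs of fresh iid vertices so that the summands are iid with mean $\eta(A)>0$, and conclude divergence from the strong law of large numbers; your indexing is shifted by one ($V_1=v_0$ instead of $V_0=v$), and the 1-dependence framing, the Borel--Cantelli alternative, and the measurability remark on $g_A$ are harmless additions.
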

\begin{proof}
Fix $A\in\mathcal{B}$ with $\eta(A)>0$. Fix an arbitrary initial state $x=(u,v,z)\in\mathcal{S}$, and let $V_1,V_2,\dots\stackrel{\mathrm{iid}}{\sim}\mu$ be the successive vertices sampled by the process after it completes the initial edge, with $V_0=v$. Note that $(g_A(V_{2k-1},V_{2k}):k\geq1)$ is an iid sequence of random variables. By the strong law of large numbers,
\[
\frac{1}{n}\sum_{k=1}^n g_A(V_{2k-1},V_{2k})\rightarrow\int_\mathcal{V}\int_\mathcal{V} g_A(a,b)\mu(da)\mu(db)=\eta(A)>0,\;\;\;\;P_x\text{-a.s.},
\]
as $n\gti$.
Consequently,
\[
\begin{aligned}
\int_0^\infty I(X(t)\in A)dt\geq&\sum_{k=1}^\infty\int_{0}^{l(V_{2k-1},V_{2k})}I\prs{\prs{V_{2k-1},V_{2k},\sbk{\frac{t}{l(V_{2k-1},V_{2k})}}_\mathbb{T}}\in A}dt\\
=&\sum_{k=1}^\infty g_A(V_{2k-1},V_{2k})\\
=&\infty,\;\;\;\;P_x\text{-a.s.}.
\end{aligned}
\]
\end{proof}
\section{No Wide-sense Regeneration}
\label{sec_regen}
We now show that the process constructed in Section \ref{sec_counter} is not wide-sense regenerative. In fact, it satisfies a stronger ``rigidity'' property: every almost surely finite random time that is independent of the state observed at that time must be almost surely constant. To establish this property, we first prepare three ingredients.

The following lemma shows that the difference between the lengths of two simple paths with common endpoints can be used to recover a common non-initial vertex.
\begin{lemma}[Algebraic decoder]
\label{lem_decode}
There exists a Borel measurable function $c:\R\rightarrow\mathcal{V}$ with the following property. Let
\[
p=(p_0,p_1,\dots,p_m),\;\;\;\;q=(q_0,q_1,\dots,q_n)
\]
be two finite simple directed paths (no vertex is visited more than once) with the same initial vertex ($p_0=q_0$) and the same terminal vertex ($p_m=q_n$). If
\[
d=\sum_{i=1}^ml(p_{i-1},p_i)-\sum_{i=1}^nl(q_{i-1},q_i)\neq0,
\]
then $c(d)$ is a non-initial vertex occurring in both $p$ and $q$, i.e., 
\[
c(d)\in\{p_1,\dots,p_m\}\cap\{q_1,\dots,q_n\}.
\]
\end{lemma}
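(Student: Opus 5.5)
The plan is to decode the edge data carried by $d$ using the rational independence of $L$, and then read off a common vertex combinatorially. Since $p$ and $q$ are simple, each directed edge occurs at most once in each path. After cancelling edges common to both paths, we get
\[
d=\sum_{e\in E^+}l(e)-\sum_{e\in E^-}l(e),
\]
where $E^+$ is the set of edges of $p$ not in $q$ and $E^-$ the set of edges of $q$ not in $p$. Because $l$ is injective on edges, this is a rational combination of distinct points of $L$ with coefficients $\pm1$. Since $d\neq0$, not all coefficients vanish. By linear independence over $\mathbb Q$, a real number has at most one representation $\sum_i q_ir_i$ with distinct $r_i\in L$ and nonzero $q_i\in\mathbb Q$. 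So $d$ determines the finite sets $l(E^+)$ and $l(E^-)$, and, applying $l^{-1}$, the edge sets $E^+$ and $E^-$ themselves.

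\textbf{Combinatorial step.} I will show there is a vertex $b$ that is the head of some edge in $E^+$ and also the head of some edge in $E^-$; any such $b$ is a non-initial vertex of both $p$ and $q$.

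\emph{Finding a deviation.} Since $d\neq0$, $E^+\cup E^-\neq\emptyset$; by symmetry assume $E^+\neq\emptyset$. Let $(p_{i-1},p_i)$ be the first edge of $p$ not in $q$. Let $j\geq i$ be the least index with $p_j\in\{q_1,\dots,q_n\}$; it exists because $p_m=q_n$, and $m\geq1$ since otherwise $n=0$ and $d=0$.

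\emph{The edge into $p_j$ lies in $E^+$.} If $j=i$, this holds by the choice of $i$. If $j>i$, then $p_{j-1}$ is not in $q$: it is not among $q_1,\dots,q_n$ by minimality of $j$, and it is not $q_0=p_0$ by simplicity of $p$. Hence $(p_{j-1},p_j)$ is not an edge of $q$.

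\emph{The edge into $p_j$ along $q$ lies in $E^-$.} The vertex $p_j$ equals some $q_k$ with $k\geq1$, so $q$ enters $p_j$ via $(q_{k-1},q_k)$. Because $p$ is simple, its only edge into $p_j$ is $(p_{j-1},p_j)$, which is not in $q$. Hence $(q_{k-1},q_k)\neq(p_{j-1},p_j)$, so $(q_{k-1},q_k)\in E^-$.

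Thus $b=p_j$ works.

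\textbf{Definition of $c$.} Given $d$, decode $E^\pm$. Among all edges in $E^+$ whose head is also the head of an edge in $E^-$, choose the one with the smallest length, and let $c(d)$ be its head. If $d$ admits no representation of this form, or no such pair of edges exists, set $c(d)$ to a fixed vertex. By the combinatorial step, $c(d)$ is a common non-initial vertex whenever $d$ arises from two paths as in the lemma.

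\textbf{Main obstacle: Borel measurability.} I would organize it as follows.
\begin{enumerate}[label=(\alph*)]
\item For each $k$ and each nonzero coefficient vector $\epsilon\in\{-1,1\}^k$, consider the map
\[
(e_1,\dots,e_k)\mapsto\sum_i\epsilon_il(e_i)
\]
on the set of $k$-tuples of distinct edges with $l(e_1)<\dots<l(e_k)$. This set is a $\sigma$-compact (indeed Borel) subset of $(\mathcal V^2)^k$, and the map is continuous.
\item By uniqueness of representation, the map is injective, and different pairs $(k,\epsilon)$ have disjoint images.
\item By the Lusin--Souslin theorem, each image is Borel and the inverse map is Borel. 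Alternatively, one can use $\sigma$-compactness and continuity of inverses on compact pieces.
\item The selection rule (a minimum over finitely many decoded edges, subject to a closed head-matching condition) is Borel on each piece.
\item A countable union of Borel pieces, together with the constant value on the Borel complement, gives a Borel $c$.
\end{enumerate}
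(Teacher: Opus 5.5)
Your proposal is correct and follows essentially the paper's route: rational independence lets $d$ determine the signed edge sets $E^\pm$, following $p$ past its first deviation to its first re-entry into $q$ gives a vertex entered by one edge of $E^+$ and one edge of $E^-$, and Borel measurability comes from the Lusin--Souslin theorem (the paper cites Corollary~15.2 of Kechris). The differences are minor: you select by smallest edge length over a length-ordered, sign-labelled parametrization rather than taking a lexicographic minimum over two lexicographically ordered lists, which simplifies the selection step slightly, and your combinatorial step spells out the paper's ``first common vertex after divergence'' argument more carefully.
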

\begin{proof}
For a finite simple directed path $p$, let
\[
\mathcal{E}(p)=\{(p_{i-1},p_i):1\leq i\leq m\}
\]
be the finite set of edges traversed by $p$. Given $p$ and $q$, set
\[
A_+=\mathcal{E}(p)\setminus\mathcal{E}(q),\;\;\;\;A_-=\mathcal{E}(q)\setminus\mathcal{E}(p).
\]
After canceling the edges common to the two paths,
\[
d=d(A_+,A_-)=\sum_{e\in A_+}l(e)-\sum_{e\in A_-}l(e).
\]
We claim that $d$ uniquely determines $A_+$ and $A_-$. Indeed, suppose another pair of disjoint finite sets of directed edges $B_+$ and $B_-$ satisfies
\[
d(A_+,A_-)=d(B_+,B_-).
\]
Then
\[
\sum_e[I(e\in A_+)-I(e\in A_-)-I(e\in B_+)+I(e\in B_-)]l(e)=0,
\]
where the sum is over the finitely many edges appearing in the four sets. Since the distinct edge lengths are linearly independent over the rationals, every coefficient must vanish. Hence, for every directed edge $e$,
\[
I(e\in A_+)-I(e\in A_-)=I(e\in B_+)-I(e\in B_-).
\]
Since $A_+\cap A_-=\emptyset$ and $B_+\cap B_-=\emptyset$, the positive and negative supports of the two sides are respectively $A_+,A_-$ and $B_+,B_-$. Therefore,
\[
A_+=B_+,\;\;\;\;A_-=B_-.
\]
Thus the scalar $d$ uniquely determines the two signed edge sets $A_+$ and $A_-$. 

Next, define
\[
C(A_+,A_-)=\{v\in\mathcal{V}:(u,v)\in A_+\text{ and }(u',v)\in A_-\text{ for some }u,u'\in\mathcal{V}\},
\]
which is the set of vertices receiving both a positive and a negative incoming edge.
We claim that $C(A_+,A_-)\neq\emptyset$ when $p_0=q_0$, $p_m=q_n$, and $d\neq0$.
Since $d\neq0$, the paths are different. Follow the two paths from their common initial vertex until they first diverge, and then let $v$ be their first common vertex after the divergence, which is a non-initial vertex occurring in both paths.
Such a vertex exists because the two paths have the same terminal vertex. Note that they must enter $v$ along different directed edges. Since the paths are simple, the incoming edge used by $p$ cannot occur elsewhere in $q$, and vice versa. Hence one belongs to $A_+$ and the other belongs to $A_-$, so
\[
v\in C(A_+,A_-).
\]

Since $d$ uniquely determines $A_+$, $A_-$, and hence $C(A_+,A_-)$, define $c(d)$ to be the lexicographically smallest element of $C(A_+,A_-)\subset\{0,1\}^\mathbb{N}$ whenever possible\footnote{That is, when $d=d(A_+,A_-)$ for some $A_+$ and $A_-$ with $C(A_+,A_-)\neq\emptyset$.}, and set $c(d)$ to be some fixed element of $\mathcal{V}$ otherwise. The Borel measurability of $c$ is verified in Section \ref{sec_details}.
\end{proof}

The following lemma shows that a vertex independent of the conditioning state cannot occur in two conditionally independent finite vertex sequences (when no fixed vertex has positive probability of occurring).
\begin{lemma}[Conditional collision]
\label{lem_collision}
Let $Y$ be an $\mathcal{S}$-valued random variable, and let
\[
U=(U_1,\dots,U_N)
\]
be a random sequence of vertices in $\mathcal{V}$, where $N<\infty$ almost surely.
Conditional on $Y$, let $U^{(1)}$ and $U^{(2)}$ be independent copies of $U$. Suppose that\[
P(v\text{ occurs in }U)=0
\]
for every fixed $v\in\mathcal{V}$. If $Z$ is a $\mathcal{V}$-valued random variable independent of $Y$, then
\[
P(Z\text{ occurs in both }U^{(1)}\text{ and }U^{(2)})=0.
\]
No independence between $Z$ and the two sequences is required.
\end{lemma}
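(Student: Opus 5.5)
The plan is to condition on $Y$ and reduce everything to a statement about the regular conditional law of $U$ given $Y$. Let $\kappa(y,\cdot)$ be a regular conditional distribution of $U$ given $Y=y$. It is a probability kernel into the space of finite vertex sequences $\bigsqcup_{n\geq0}\mathcal{V}^n$, which is Polish, so such a kernel exists. For $v\in\mathcal{V}$ and $y\in\mathcal{S}$ set
\[
f(y,v)=\kappa\prs{y,\{w:\ v\text{ occurs in }w\}}.
\]
This $f$ is jointly Borel. The event $\{(w,v): v\text{ occurs in } w\}$ is closed in each $\mathcal{V}^n\times\mathcal{V}$, and kernels integrated against measurable sets give measurable functions.

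\textbf{Step 1: rewrite the target probability.} By construction of $U^{(1)},U^{(2)}$, the triple $(Y,U^{(1)},U^{(2)})$ has law $P_Y(dy)\,\kappa(y,dw_1)\,\kappa(y,dw_2)$. The variable $Z$ is jointly distributed with everything in some unspecified way, and this needs care. The event in the statement only involves $Z$ occurring in both sequences, so I bound it crudely by
\[
P\prs{Z\text{ occurs in both}}\leq P\prs{\exists v:\ v\in U^{(1)}\cap U^{(2)}\text{ and }\ldots}.
\]
That bound is too crude, since two sequences may genuinely collide. A better route is to use the only hypothesis on $Z$, namely $Z\perp Y$. The event $\{Z\in U^{(1)}\cap U^{(2)}\}$ forces $Z$ to lie in the set
\[
G(Y)=\{v: f(Y,v)>0\},
\]
up to a null set. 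The justification: conditional on $Y$, $U^{(1)}$ has law $\kappa(Y,\cdot)$, and its finitely many entries that are not $\kappa(Y,\cdot)$-atoms for occurrence form a null event. So I will show that a.s. every vertex occurring in $U^{(1)}$ lies in $G(Y)$ or in a union of vertices of zero occurrence probability. Here the issue is that a diffuse $U$ never hits a prescribed point, yet always hits some point. So the correct use of two independent copies is this: a vertex occurring in both copies must be an atom of the conditional occurrence measure. For diffuse parts, Fubini gives $\int\int I(w_1\cap w_2\cap D\neq\emptyset)\,\kappa\,\kappa=0$, where $D$ is the non-atomic part.

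\textbf{Step 2: collisions occur only at conditional atoms.} Fix $y$. The set $G(y)$ is countable, since $\sum_v f(y,v)$ is bounded by $E[N\mid Y=y]$. More carefully, it is countable because for each $\epsilon$ and $n$ only finitely many $v$ have $\kappa(y,\{N\leq n,\ v\in w\})>\epsilon$. By Fubini over $w_2$, for fixed $w_2$ every entry $v\notin G(y)$ of $w_2$ satisfies $\kappa(y,\{v\in w_1\})=0$, and $w_2$ has finitely many entries. Hence
\[
\kappa\otimes\kappa\prs{y;\ w_1\cap w_2\not\subset G(y)}=0.
\]
Therefore, almost surely, $Z\in U^{(1)}\cap U^{(2)}$ implies $Z\in G(Y)$.

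\textbf{Step 3: use independence of $Z$ and $Y$, and the hypothesis.} It remains to show $P(Z\in G(Y))=0$. Because $Z\perp Y$, Fubini gives
\[
P(Z\in G(Y))=\int\int I(f(y,v)>0)\,P_Y(dy)\,P_Z(dv).
\]
For fixed $v$ the hypothesis says $0=P(v\in U)=\int f(y,v)\,P_Y(dy)$, so $f(y,v)=0$ for $P_Y$-a.e.\ $y$. The inner integral over $y$ therefore vanishes for each $v$, and the whole double integral is $0$. This uses the joint measurability of $\{(y,v): f(y,v)>0\}$.

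The main obstacle is the measurability in Step 2: asserting ``a.s.\ $Z\in G(Y)$'' needs $\{(y,v):f(y,v)>0\}$ to be Borel, together with a clean Fubini argument on $\kappa\otimes\kappa$. I would handle the Fubini part by writing the collision set as $\bigcup_{i,j}\{w_1(i)=w_2(j)\}$ and integrating $w_1$ out first for fixed $w_2$.
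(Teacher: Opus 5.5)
Your argument is correct and is essentially the paper's proof. Your set $G(y)=\{v:f(y,v)>0\}$ is the paper's section $\mathcal{A}_y$, since a vertex has positive conditional occurrence probability exactly when it is a conditional atom of some coordinate $U_j$; your Step 2 is the paper's Fubini computation placing every common vertex of the two conditional copies in $\mathcal{A}_Y$, and your Step 3 is the paper's bound $P(Z\in\mathcal{A}_Y)=0$ from independence of $Z$ and $Y$. The differences are cosmetic: you use the kernel of the whole sequence and get joint measurability of $f$ from sections of a closed set rather than the paper's infimum over shrinking balls, and neither the countability of $G(y)$ nor the exploratory remarks in Step 1 are needed.
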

\begin{proof}
Adjoin an isolated cemetery point $\partial\notin\mathcal{V}$ and pad $U$ by setting $U_j=\partial$ for $j>N$. Let $K_j(y,\cdot)=P(U_j\in\cdot|Y=y)$, and define
\[
\mathcal{A}=\{(y,v)\in\mathcal{S}\times\mathcal{V}:K_j(y,\{v\})>0\text{ for some }j\geq1\}.
\]
The set $\mathcal{A}$ is Borel, which is verified in Section \ref{sec_details}. Write $\mathcal{A}_y=\{v:(y,v)\in\mathcal{A}\}$. For every fixed $v\in\mathcal{V}$ and every $j\geq1$,
\[
\E[K_j(Y,\{v\})]=P(U_j=v)=0.
\]
Therefore, $K_j(Y,\{v\})=0$ almost surely, and a countable union gives 
\[
P(v\in\mathcal{A}_Y)=P((Y,v)\in\mathcal{A})\leq\sum_{j=1}^\infty P(K_j(Y,\{v\})>0)=0
\]
for every fixed $v\in\mathcal{V}$. Since $Z$ is independent of $Y$,
\[
P(Z\in\mathcal{A}_Y)=\int_\mathcal{V}P(v\in \mathcal{A}_Y)P(Z\in dv)=0.
\]
Let $U_i^{(1)}$ and $U_j^{(2)}$ denote the padded coordinates of the two conditional copies. Conditional on $Y=y$, for every $i,j\geq1$,
\[
P(U_i^{(1)}=U_j^{(2)},U_i^{(1)}\in\mathcal{V}\setminus\mathcal{A}_y|Y=y)=\int_{\mathcal{V}\setminus\mathcal{A}_y}K_j(y,\{v\})K_i(y,dv)=0,
\]
where the last equality follows because 
\[v\in\mathcal{V}\setminus\mathcal{A}_y\Rightarrow (y,v)\notin\mathcal{A}\Rightarrow K_j(y,\{v\})=0\text{ for all }j\geq1.
\]
Taking a countable union over $i,j$, every common vertex of $U^{(1)}$ and $U^{(2)}$ belongs to $\mathcal{A}_Y$ almost surely, so
\[
P(Z\text{ occurs in both }U^{(1)}\text{ and }U^{(2)})\leq P(Z\in\mathcal{A}_Y)=0.
\]
\end{proof}

The following lemma records an elementary consequence of independence that will be useful in handling the initial part of the trajectory.
\begin{lemma}[Independence consequence]
\label{lem_independence}
Let $W$ be a real-valued random variable independent of $Y$, let $B\subset\mathcal{S}$ be Borel with $P(Y\in B)>0$, and let $h:\mathcal{S}\rightarrow\R$ be Borel. If $W=h(Y)$ on $\{Y\in B\}$, then $W$ is almost surely constant.
\end{lemma}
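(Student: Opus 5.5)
The plan is to compare the conditional law of $W$ given $Y\in B$ with its unconditional law. Independence of $W$ and $Y$, together with $P(Y\in B)>0$, lets me condition on the event $\{Y\in B\}$ without changing the law of $W$. On that event, $W=h(Y)$ is a function of $Y$, so it should also be independent of $Y$ in a way that forces it to be degenerate.

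\textbf{Step 1.} Let $\nu$ be the law of $W$. For every Borel $D\subset\R$,
\[
P(W\in D,\,Y\in B)=\nu(D)\,P(Y\in B),
\]
so the law of $W$ given $\{Y\in B\}$ is again $\nu$.

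\textbf{Step 2.} Take $D\subset\R$ Borel and set $E=B\cap h^{-1}(D)$, which is Borel because $h$ is Borel. Since $W=h(Y)$ on $\{Y\in B\}$,
\[
\{W\in D,\,Y\in E\}=\{Y\in E\},
\]
and, using independence for the first equality,
\[
P(W\in D)\,P(Y\in E)=P(W\in D,\,Y\in E)=P(Y\in E).
\]
By Step 1 and the identity $\{Y\in E\}=\{Y\in B,\,W\in D\}$,
\[
P(Y\in E)=\nu(D)\,P(Y\in B).
\]
Substituting this into the previous display gives $\nu(D)^2P(Y\in B)=\nu(D)P(Y\in B)$. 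Because $P(Y\in B)>0$, this yields $\nu(D)\in\{0,1\}$ for every Borel $D$.

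\textbf{Step 3.} A Borel probability measure on $\R$ that takes only the values $0$ and $1$ is a point mass. To see this, let
\[
a=\sup\{t\in\R:\nu((-\infty,t])=0\}.
\]
Then $a$ is finite, since the distribution function tends to $0$ at $-\infty$ and to $1$ at $+\infty$. Moreover $\nu((-\infty,a))=0$ and $\nu((-\infty,a])=1$ by monotone limits. Hence $\nu=\delta_a$, and $W=a$ almost surely.

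There is no real obstacle here. The only point needing care is the measurability of $E=B\cap h^{-1}(D)$, which is exactly where the hypothesis that $h$ is Borel is used.
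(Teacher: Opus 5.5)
Your proof is correct, but it takes a different route from the paper's. The paper disintegrates over the law $\lambda$ of $Y$. By independence, $P(Y\in B)=P(Y\in B,\,W=h(Y))=\int_B P(W=h(y))\,\lambda(dy)\le P(Y\in B)$. This forces $P(W=h(y))=1$ for $\lambda$-a.e.\ $y\in B$, so $W=h(\tilde y)$ almost surely for some $\tilde y\in B$.

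You never integrate over $y$. Instead you apply the product rule for independent events twice, to the test sets $D$ and $E=B\cap h^{-1}(D)$, and obtain $\nu(D)=\nu(D)^2$. This is the classical ``independent of itself'' argument: on $\{Y\in B\}$, $W$ is both a function of $Y$ and independent of $Y$.

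Your route is more elementary. It needs neither conditioning on $Y=y$ (which the paper writes somewhat informally, and which amounts to Fubini for the product law of $(W,Y)$) nor measurability of the event $\{W=h(Y)\}$. The price is your Step~3, showing that a $\{0,1\}$-valued Borel probability on $\R$ is a point mass. The paper gets this for free, because its computation produces the constant $h(\tilde y)$ directly and also shows that it lies in $h(B)$.
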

\begin{proof}
Let $\lambda(\cdot)=P(Y\in\cdot)$.
Note that
\[
\begin{aligned}
P(Y\in B)=&P(Y\in B,W=h(Y))\\
=&\int_B P(W=h(Y)|Y=y)\lambda(dy)\\
=&\int_B P(W=h(y)|Y=y)\lambda(dy)\\
=&\int_B P(W=h(y))\lambda(dy)\\
\leq&\int_B \lambda(dy)\\
=&P(Y\in B).
\end{aligned}
\]
Since $P(Y\in B)>0$, the equality above forces the integrand to attain its maximum value $1$ for $\lambda$-almost every $y\in B$. Thus, there is at least one $\tilde{y}\in B$ such that $P(W=h(\tilde{y}))=1$, making $W$ almost surely constant.
\end{proof}

With the three ingredients in place, we are ready to establish the rigidity property announced at the beginning of the section. For $y=(u,v,z)$, define
\[
\gamma(y)=(u,v),\;\;\;\;\beta(y)=u,\;\;\;\;\alpha(y)=\bar{z}l(u,v).
\]
Thus $\gamma(y)$ is the current edge, $\beta(y)$ its initial vertex, and $\alpha(y)$ the elapsed time on that edge.

\begin{proposition}[Time rigidity]
\label{prop_rigidity}
Fix $x\in\mathcal{S}$. On any randomized extension preserving the law of $X$ under $P_x$, let $T$ be an almost surely finite random time. If $T$ is independent of $X(T)$, then $T$ is almost surely constant.
\end{proposition}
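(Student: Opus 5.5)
We argue by contradiction: suppose $T$ is independent of $Y=X(T)$ but not almost surely constant. The key observation is that $T$ can be written as the length of the path traversed up to time $T$. Write $x=(u_0,v_0,z_0)$, and let $V_0=v_0,V_1,V_2,\dots$ be the successive vertices, with jump times $\tau_1=r(x)$ and $\tau_{k+1}=\tau_k+l(V_{k-1},V_k)$. On $\{T\geq r(x)\}$ we have $X(T)=(V_{N-1},V_N,z)$ for the random index $N$ with $\tau_N\leq T<\tau_{N+1}$, and
\[
T=r(x)+\sum_{i=1}^{N}l(V_{i-1},V_i)+\alpha(Y),
\]
where $\alpha(Y)$ is the elapsed time on the current edge. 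On $\{T<r(x)\}$, $T=\alpha(Y)-\bar z_0 l(u_0,v_0)$ is a Borel function of $Y$. So if $P(T<r(x))>0$, Lemma \ref{lem_independence} with $B=\{\gamma(y)=(u_0,v_0),\ \alpha(y)<l(u_0,v_0)\}$ (intersected with the event that no jump has occurred; more carefully, since $\mu$ is atomless, the event $\gamma(Y)=(u_0,v_0)$ after a jump has probability zero) gives that $T$ is constant. We may therefore assume $T\geq r(x)$ almost surely, and $T-\alpha(Y)$ equals $r(x)$ plus the length of the walk $v_0\to V_1\to\cdots\to V_N=\beta(Y)\to\cdot$. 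Since the $V_i$ are iid from the atomless $\mu$, this walk is almost surely simple, and almost surely contains none of $u_0,v_0$ beyond $V_0$.

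Next we use a doubling construction. Let $Y$ have the law of $X(T)$. Conditional on $Y$, draw two independent copies $(T^{(1)},U^{(1)})$ and $(T^{(2)},U^{(2)})$ of the pair consisting of $T$ and the vertex path $U=(V_1,\dots,V_{N-1})$ (the interior vertices before the terminal vertex $\beta(Y)$). Because $T$ is independent of $Y$, the marginals of $T^{(1)}$ and $T^{(2)}$ are each the law of $T$, and $T^{(1)},T^{(2)}$ are independent of $Y$. The difference
\[
d=T^{(1)}-T^{(2)}=\sum_{p}l-\sum_{q}l
\]
is a difference of lengths of two simple paths $p=(v_0,U^{(1)},\beta(Y))$ and $q=(v_0,U^{(2)},\beta(Y))$, both starting at $v_0$ and ending at $\beta(Y)$; the terms $r(x)$ and $\alpha(Y)$ cancel. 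If $T$ is nonconstant then, since $T^{(1)},T^{(2)}$ are conditionally iid given $Y$ and unconditionally each has the law of $T$, we should get $P(d\neq0)>0$. The cleanest route: $T^{(1)}$ and $T^{(2)}$ are conditionally iid given $Y$ with conditional law equal to the law of $T$, since independence means the conditional law does not depend on $Y$. Hence they are in fact unconditionally independent, so $P(T^{(1)}\ne T^{(2)})>0$.

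On $\{d\neq0\}$, Lemma \ref{lem_decode} gives that $Z=c(d)$ is a non-initial vertex common to $p$ and $q$. Moreover $(T^{(1)},T^{(2)})$ is independent of $Y$: its conditional law given $Y$ is the product of the law of $T$ with itself, which does not depend on $Y$. Hence $Z=c(T^{(1)}-T^{(2)})$ is independent of $Y$. The common vertex is either the terminal vertex $\beta(Y)$ or an interior vertex occurring in both $U^{(1)}$ and $U^{(2)}$. To handle both cases at once, apply Lemma \ref{lem_collision} to the extended sequence $U'=(U,\beta(Y))$, or more simply treat the case $Z=\beta(Y)$ directly: $\beta(Y)$ has an atomless law and $Z$ is independent of $Y$, so $P(Z=\beta(Y))=0$. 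For interior vertices, the hypothesis of Lemma \ref{lem_collision}, that $P(v$ occurs in $U)=0$, holds because each $V_i$ is $\mu$-distributed and $\mu$ is atomless, using a countable union over $i$. Lemma \ref{lem_collision} then gives $P(Z\in U^{(1)}\cap U^{(2)})=0$. Hence $P(d\neq0)=0$, which is a contradiction.

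The main obstacle is the measure-theoretic bookkeeping. The objects must be defined on a common space so that the conditional copies exist, which can be done using regular conditional distributions on Polish spaces. Two further points need verification: that $U$ is a measurable random finite sequence determined by the path up to $T$, and that the initial segment with $T<r(x)$ is handled cleanly through Lemma \ref{lem_independence}.
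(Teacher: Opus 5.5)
Your route is essentially the paper's: Lemma \ref{lem_independence} disposes of the initial edge; you then double $(T,\text{path})$ conditionally on $Y=X(T)$, cancel $r(x)$ and $\alpha(Y)$, decode a common vertex via Lemma \ref{lem_decode}, and rule it out with Lemma \ref{lem_collision}. Noting up front that $(T^{(1)},T^{(2)})\sim\rho\otimes\rho$ independently of $Y$ is a tidy way to finish.

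One step is not justified as written. Unlike the paper, you do not rule out $\tau_1\le T<\tau_2$, and on that event $\beta(Y)=v_0$ is a fixed vertex. So your claim that $\beta(Y)$ has an atomless law is unsupported, and $P(Z=\beta(Y))=P(\beta(Y)=v_0)\,P(Z=v_0)$ need not vanish. The same atom also violates the hypothesis of Lemma \ref{lem_collision} for your alternative $U'=(U,\beta(Y))$. Either of two repairs works.
\begin{enumerate}[label=(\roman*)]
\item Add the paper's second reduction: on $\{\beta(Y)=v_0\}$ one has $T=r(x)+\alpha(Y)$, so Lemma \ref{lem_independence} forces $P(\beta(Y)=v_0)=0$ when $T$ is nonconstant.
\item On $\{d\neq0\}$, the decoded vertex $Z$ is non-initial in a simple path from $v_0$, hence $Z\neq v_0$. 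Apply Lemma \ref{lem_collision} to the full sequence of non-initial vertices $(V_1,\dots,V_N)$, which is empty for the trivial path. Every entry is some $V_k$ with $k\geq1$, so the lemma's hypothesis holds; this is what the paper does with $\tilde\Pi_T$.
\end{enumerate}

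There is also an off-by-one in your indexing. With $\tau_N\le T<\tau_{N+1}$ you get $X(T)=(V_{N-1},V_N,z)$, so $\beta(Y)=V_{N-1}$, and the sum in your formula for $T$ must stop at $N-1$; as written it double-counts the current edge. The rest of your write-up (the walk ending at $V_N=\beta(Y)$, with $U=(V_1,\dots,V_{N-1})$ as interior vertices) corresponds to the convention $\tau_{N+1}\le T<\tau_{N+2}$. Pick one convention and use it throughout.
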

\begin{proof}
Fix $x=(a,b,z)\in\mathcal{S}$ and write
\[
r_0=r(x),\;\;\;\;\alpha_0=\alpha(x).
\]
Let $V_1,V_2,\dots$ be the successive sampled vertices, with $V_0=b$, and define the transition times
\[
J_1=r_0,\;\;\;\;J_{n+1}=J_n+l(V_{n-1},V_n),\;\;\;\;n\geq1.
\]
The sampled vertices are measurable functions of the path, which is verified in Section \ref{sec_details}. Therefore, these variables remain available on any randomized extension. Since $\mu$ is atomless, with probability one the vertices $V_1,V_2,\dots$ are pairwise distinct and avoid two fixed vertices $a,b$. We work on this probability-one event. Put $Y=X(T)$.

First consider the initial edge. On the probability-one event just described,
\[
\{\gamma(Y)=(a,b)\}=\{T<J_1\}.
\]
The event on the left belongs to $\sigma(Y)$ and the event on the right belongs to $\sigma(T)$, since $J_1=r_0$ is deterministic. If $p=P(T<J_1)$, independence (of $T$ and $Y$) gives $p=p^2$. If $p=1$, then
$
T=\alpha(Y)-\alpha_0
$
almost surely, and Lemma \ref{lem_independence} implies that $T$ is constant. Hence, if $T$ is non-constant, then $T\geq J_1$ almost surely ($p=0$), under which
\[
\{\beta(Y)=b\}=\{J_1\leq T<J_2\}.
\]
On this event, $T=J_1+\alpha(Y)$. If the event has positive probability, Lemma \ref{lem_independence} again implies that $T$ is constant. Hence, if $T$ is non-constant, then $T\geq J_2$ almost surely.

Let $N\geq 2$ be the unique finite random integer satisfying
\[
J_N\leq T<J_{N+1},
\]
and define the completed vertex path
\[
\Pi_T=(V_0,\dots,V_{N-1}).
\]
It is a finite simple path, starts at $b$, ends at $\beta(Y)$, and satisfies
\begin{equation}
\label{eqn_T}
\begin{aligned}
    T=J_1+\sum_{k=1}^{N-1}l(V_{k-1},V_k)+\alpha(Y).
\end{aligned}
\end{equation}
The finite-path space is standard Borel, and $\Pi_T$ is measurable, which is verified in Section \ref{sec_details}.

Let $\rho=\mathrm{Law}(T)$ and $\lambda=\mathrm{Law}(Y)$. Take a regular conditional distribution of $(T,\Pi_T)$ given $Y$. For $\lambda$-almost every $y$, this conditional law is supported on simple paths that start at $b$, end at $\beta(y)$, and satisfy \eqref{eqn_T} with $Y=y$. On a new probability space, first sample $Y\sim\lambda$ and then, conditional on $Y$, sample two independent copies
\[
(T_1,\Pi_{T_1}),\;\;\;\;(T_2,\Pi_{T_2})
\]
of $(T,\Pi_T)$. Since $T$ is independent of $Y$, $D=T_1-T_2$ and $c(D)$ are independent of $Y$, where the latter uses the Borel measurability of $c$.

The two conditional copies have the same observed state $Y$. Thus both completed vertex paths start at $b$ and end at $\beta(Y)$. Subtracting \eqref{eqn_T} for the two copies cancels the deterministic term $J_1$ and the common current-edge contribution $\alpha(Y)$, giving
\[
D=L(\Pi_{T_1})-L(\Pi_{T_2}),
\]
where
\[
L(\pi)=\sum_{k=1}^ml(v_{k-1},v_k),\;\;\;\;\pi=(v_0,\dots,v_m).
\]
On $\{D\neq0\}$, Lemma \ref{lem_decode} therefore implies that $c(D)$ is a non-initial vertex occurring in both $\Pi_{T_1}$ and $\Pi_{T_2}$.
Let $\tilde{\Pi}_T$ be the random finite sequence of non-initial vertices of $\Pi_T$. For every fixed $v\in\mathcal{V}$,
\[
P(v\text{ occurs in }\tilde{\Pi}_T)\leq\sum_{k=1}^\infty P(V_k=v)=0.
\]
Conditional on $Y$, the corresponding sequences from $\Pi_{T_1}$ and $\Pi_{T_2}$ are independent copies of $\tilde{\Pi}_T$. Since $c(D)$ is independent of $Y$, Lemma \ref{lem_collision} shows that the probability that $c(D)$ occurs in both paths is zero. Hence 
\[
P(D\neq0)\leq P(c(D)\text{ occurs in both }\tilde{\Pi}_{T_1}\text{ and }\tilde{\Pi}_{T_2})=0,
\]
and therefore $D=0$ almost surely.

Finally, $T_1$ and $T_2$ are independent with common law $\rho$, while $T_1=T_2$ almost surely. Hence $\rho$ is a point mass, so $T$ is almost surely constant.
\end{proof}

\begin{corollary}
\label{cor_neq}
For every $x\in\mathcal{S}$ and every $0\leq s<t$,
\[
P_x(X(s)\in\cdot)\neq P_x(X(t)\in\cdot).
\]
\end{corollary}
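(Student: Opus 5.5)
Suppose, for contradiction, that $P_x(X(s)\in\cdot)=P_x(X(t)\in\cdot)$ for some $0\le s<t$. We will build a non-constant random time $T$ that is independent of $X(T)$, contradicting Proposition~\ref{prop_rigidity}. The natural candidate is a time that equals $s$ or $t$ according to an independent fair coin.

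Enlarge the probability space carrying $X$ under $P_x$ by an independent random variable $\xi$ with $P(\xi=0)=P(\xi=1)=1/2$. This is a randomized extension that preserves the law of $X$. Set $T=s$ on $\{\xi=0\}$ and $T=t$ on $\{\xi=1\}$. Then $T$ is finite and, since $s\neq t$, takes two distinct values with probability $1/2$ each, so it is not almost surely constant.

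It remains to check that $T$ is independent of $X(T)$. For Borel $B\subset\mathcal{S}$, the independence of $\xi$ and $X$ gives
\[
P(T=s,\,X(T)\in B)=P(\xi=0)\,P_x(X(s)\in B)=\tfrac12 P_x(X(s)\in B),
\]
and likewise $P(T=t,\,X(T)\in B)=\tfrac12 P_x(X(t)\in B)$. Because the two marginals are equal by assumption, $P(X(T)\in B)=P_x(X(s)\in B)$. Hence $P(T=s,\,X(T)\in B)=P(T=s)\,P(X(T)\in B)$, and the same identity holds with $t$ in place of $s$. Since $T$ takes only the two values $s$ and $t$, these identities give independence of $T$ and $X(T)$.

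Proposition~\ref{prop_rigidity} then forces $T$ to be almost surely constant, which is a contradiction. No step is a real obstacle. The only care needed is to confirm that Proposition~\ref{prop_rigidity} applies to a time built from an independent randomization, which it explicitly allows, and to note that no stopping-time property is required of $T$.
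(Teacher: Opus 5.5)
Your proposal is correct and follows the paper's own proof. In both, an independent fair coin selects $T\in\{s,t\}$, the joint law of $(T,X(T))$ becomes the product $\frac{\delta_s+\delta_t}{2}\otimes\nu$, and Proposition~\ref{prop_rigidity} then rules out such a non-constant $T$.
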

\begin{proof}
Suppose instead that both laws are equal to a probability measure $\nu$. On an extension, let $B$ be an independent $\mathrm{Ber}(1/2)$ random variable. Set $T=s$ when $B=0$ and set $T=t$ when $B=1$. Then
\[
P_x(T\in\cdot, X(T)\in\cdot)=\frac{\delta_s+\delta_t}{2}\otimes \nu,
\]
so $T$ is independent of $X(T)$. Since $s\neq t$, $T$ is not almost surely constant, contradicting Proposition \ref{prop_rigidity}.
\end{proof}

We can now combine Proposition \ref{prop_rigidity} with Corollary \ref{cor_neq} to rule out wide-sense regeneration and complete the proof of Theorem \ref{thm_main}.
\begin{proof}[Proof of Theorem \ref{thm_main}]
Proposition 1 shows that $X$ is Harris recurrent.
Fix $x\in\mathcal{S}$ and suppose that randomized stopping times 
$
0\leq R_0<R_1<\cdots\infty
$
satisfy Definition \ref{def:regeneration} under $P_x$. Since $X(R_n)$ is the time-zero coordinate of $\Theta_{R_n}X$, condition (i) implies that $R_n$ is independent of $X(R_n)$. Proposition \ref{prop_rigidity} therefore implies that every $R_n$ is almost surely constant. Condition (ii) then implies that $P_x(X(R_n)\in\cdot)$ is independent of $n$, contradicting Corollary \ref{cor_neq}. Thus $X$ is not wide-sense regenerative.
\end{proof}

\section{A Rationally Independent Cantor Set}
\label{sec_cantor}
For completeness, we give an alternative direct construction of a Cantor set with the property required in Section \ref{sec_counter}. It is enough to exclude nontrivial integer relations, since rational relations can be cleared of denominators.

Enumerate all nonzero finite integer vectors, allowing repetitions, as
\[
q^{(1)},q^{(2)},\dots, \;\;\;\;q^{(j)}=(q_1^{(j)},\dots,q_{d_j}^{(j)})\in\mathbb{Z}^{d_j}\setminus\{0\},
\]
in such a way that $d_j\leq 2^j$ for every $j$ (this is easy to arrange since repetitions are allowed). We recursively construct, for each $n\geq0$, pairwise disjoint nondegenerate compact intervals
\[
\{I_\sigma:\sigma\in\{0,1\}^n\}\subset[1,2]
\]
such that
\begin{enumerate}[label=(\roman*)]
    \item $I_{\sigma0}$ and $I_{\sigma1}$ lie in the interior of $I_\sigma$;
    \item $\max_{|\sigma|=n}\mathrm{diam}(I_\sigma)\leq 2^{-n}$ for $n\geq1$;
    \item for every $j\leq n$ and every injection $\iota:\{1,\dots,d_j\}\rightarrow\{0,1\}^n$,
    \begin{equation}
    \label{eqn_cantor}
    \begin{aligned}
        \inf_{x_i\in I_{\iota(i)}}\abs{\sum_{i=1}^{d_j}q_i^{(j)}x_i}>0.
    \end{aligned}
    \end{equation}
\end{enumerate}
Start with $I_\emptyset=[1,2]$. Suppose level $n$ has been constructed. For each parent interval, choose two nonempty open subintervals with disjoint closures contained in its interior. A prospective center in each of the $2^{n+1}$ child intervals therefore ranges over a nonempty open box in $\R^{2^{n+1}}$. For each $j\leq n+1$ and each injection $\iota$ into the level-$(n+1)$ coordinates, the forbidden equality
\[
\sum_{i=1}^{d_j}q_i^{(j)}x_{\iota(i)}=0
\]
defines a proper hyperplane ($q^{(j)}\neq0$). There are only finitely many such hyperplanes, and their union cannot contain the open box. Choose the child centers outside this union. By continuity and finiteness of the constraints, sufficiently small compact intervals around these centers still satisfy all the strict inequalities in \eqref{eqn_cantor}; they may also be chosen pairwise disjoint, contained in the prescribed parent interiors, and of diameter at most $2^{-(n+1)}$. This completes the induction.

Set
\[
L_0=\bigcap_{n\geq0}\bigcup_{|\sigma|=n}I_\sigma.
\]
The binary nesting and the uniform decay of the diameters give a homeomorphism from $\{0,1\}^\mathbb{N}$ to $L_0$, so $L_0$ is a Cantor set. Suppose distinct $r_1,\dots,r_k\in L_0$ satisfy a nontrivial rational relation. After clearing denominators, let the resulting nonzero integer coefficient vector be $q^{(j)}$. At all sufficiently large levels, the points $r_1,\cdots,r_k$ lie in distinct basic intervals. Choosing such a level $n\geq j$ contradicts \eqref{eqn_cantor}. Hence $L_0$ is linearly independent over $\mathbb{Q}$.

\section{Technical Details}
\label{sec_details}
\subsection{Measurability in Lemma \ref{lem_decode}}
We now verify the Borel measurability of the function $c$ in Lemma \ref{lem_decode}. 
Let $\prec$ denote the lexicographic order on $\mathcal{V}=\{0,1\}^\mathbb{N}$. Thus $u\prec v$ if, at the first coordinate at which $u$ and $v$ differ, $u$ has value $0$ and $v$ has value $1$. This order is Borel, since
\[
\{(u,v):u\prec v\}=\bigcup_{k\geq1}\{(u,v):u_j=v_j\text{ for }j<k,\;u_k=0,v_k=1\},
\]
which is a countable union of Borel sets determined by finitely many coordinates.
Let
\[
\mathcal{E}=\{(u,v)\in\mathcal{V}^2:u\neq v\}
\]
be the space of (non-loop) directed edges. Only such edges occur in a simple path. As a Borel subset of $\mathcal{V}^2$, $\mathcal{E}$ is a standard Borel space\footnote{That is, there exists a bijection between $\mathcal E$ and a Borel subset of a Polish space such that both the bijection and its inverse are measurable.}. Equip $\mathcal{E}$ with the lexicographic order induced by $\prec$: for $e=(u,v)$ and $e'=(u',v')$, let
$e\prec_\mathcal{E}e'$ if $u\prec u'$, or if $u=u'$ and $v\prec v'$, which is again a Borel order. 

For $m,n\geq0$, let $\mathcal{D}_{m,n}$ consist of all tuples
\[
(e_1,\dots,e_m;f_1,\dots,f_n)
\]
such that
\[
e_1\prec_\mathcal{E}\cdots\prec_\mathcal{E} e_m,\;\;\;\;f_1\prec_\mathcal{E}\cdots\prec_\mathcal{E} f_n,
\]
and
\[
e_i\neq f_j,\;\;\;\;1\leq i\leq m,\;\;1\leq j\leq n.
\]
Each $\mathcal{D}_{m,n}$ is a Borel subset of $\mathcal{E}^{m+n}$. Hence the countable disjoint union
\[
\mathcal{D}=\bigcup_{m,n\geq0}\mathcal{D}_{m,n}
\]
is a standard Borel space. Every ordered pair $(A_+,A_-)$ of disjoint finite sets of directed edges has a unique representation in $\mathcal{D}$, obtained by listing the elements of each set in increasing order. Define $d:\mathcal{D}\rightarrow\R$ by
\[
d(A_+,A_-)=\sum_{e\in A_+}l(e)-\sum_{e\in A_-}l(e),
\]
as in the proof of Lemma \ref{lem_decode}, where it is shown to be one-to-one.
Moreover, the map $d$ is Borel, since on each $\mathcal{D}_{m,n}$ it is a finite sum of continuous functions.

Since $\mathcal{D}$ and $\R$ are standard Borel spaces, with $d:\mathcal{D}\rightarrow\R$ a one-to-one Borel map, Corollary 15.2 of \cite{kechris1995classical} implies that $G=d(\mathcal{D})$ is a Borel subset of $\R$, and $d$ is a Borel isomorphism between $\mathcal{D}$ and $G$. In particular, $d^{-1}:G\rightarrow\mathcal{D}$ is Borel measurable.

It remains only to verify that the lexicographic selection used in Lemma \ref{lem_decode} is Borel.
Let
\[
\bar{\pi}:\mathcal{E}\rightarrow\mathcal{V},\;\;\;\;\bar{\pi}(u,v)=v,
\]
denote the terminal vertex of an edge. For
\[
\delta=(e_1,\dots,e_m;f_1,\dots,f_n)\in\mathcal{D}_{m,n},
\]
define
\[
C(\delta)=\{\bar{\pi}(e_i):1\leq i\leq m\}\cap\{\bar{\pi}(f_j):1\leq j\leq n\}.
\]
Under the identification of $\delta$ with $(A_+,A_-)$, this is exactly the set $C(A_+,A_-)$ defined in Lemma \ref{lem_decode}. The set 
\[
\{\delta\in\mathcal{D}_{m,n}:C(\delta)\neq\emptyset\}
\]
is Borel, since it is determined by finitely many equalities of the form $\bar{\pi}(e_i)=\bar{\pi}(f_j)$. For each $1\leq i\leq m$, the condition that $\bar{\pi}(e_i)$ is the lexicographically smallest element of $C(\delta)$ is Borel. Indeed, it requires first that $\bar{\pi}(e_i)\in C(\delta)$, and then that, for $1\leq k\leq m$, either $\bar{\pi}(e_k)\notin C(\delta)$ or $\bar{\pi}(e_i)\preceq\bar{\pi}(e_k)$. Each of these conditions is Borel, since $\bar{\pi}$ is continuous and the lexicographic order $\preceq$ is Borel. As only finitely many indices are involved, their intersections and unions are Borel.

It follows that the map assigning to $\delta$ the lexicographically smallest element of $C(\delta)$, whenever $C(\delta)\neq\emptyset$, is Borel measurable. Indeed, for any Borel set $B\subset\mathcal{V}$, its inverse image is a finite union over $i$ of the Borel sets on which $\bar{\pi}(e_i)$ is the lexicographically smallest element of $C(\delta)$ and $\bar{\pi}(e_i)\in B$. Define $\kappa:\mathcal{D}\rightarrow\mathcal{V}$ to be this lexicographic minimum whenever $C(\delta)\neq\emptyset$, and otherwise some fixed element of $\mathcal{V}$ (as in the proof of Lemma \ref{lem_decode}). The preceding argument shows that $\kappa$ is Borel measurable on each $\mathcal{D}_{m,n}$ and hence on $\mathcal{D}$.

Finally, recall that $G=d(\mathcal{D})$ is Borel and that $d^{-1}:G\rightarrow\mathcal{D}$ is Borel measurable. For $r\in G$, the function $c$ in Lemma \ref{lem_decode} is therefore
\[
c(r)=\kappa(d^{-1}(r)).
\]
Thus $c$ is Borel measurable on $G$. On $\R\setminus G$, $c$ takes some fixed value in $\mathcal{V}$ (as in the proof of Lemma \ref{lem_decode}). Since $G$ is Borel, the resulting map $c:\R\rightarrow\mathcal{V}$ is Borel measurable.
\subsection{Measurability in Lemma \ref{lem_collision}}
We now verify the measurability of the set $\mathcal{A}$ in Lemma \ref{lem_collision}.
Adjoin an isolated cemetery point $\partial$ to $\mathcal{V}$ and equip $\mathcal{V}\cup\{\partial\}$ with any compatible Polish metric $d_0$. Let $K(y,\cdot)$ be a probability kernel from $\mathcal{S}$ to $\mathcal{V}\cup\{\partial\}$. Then
\[
(y,v)\mapsto K(y,\{v\})
\]
is Borel on $\mathcal{S}\times\mathcal{V}$, since
\[
K(y,\{v\})=\inf_{m\geq1}\int_{\mathcal{V}\cup\{\partial\}}I(d_0(w,v)<1/m)K(y,dw).
\]
Applying this to each conditional kernel $K_j$ in Lemma \ref{lem_collision} shows that
\[
\mathcal{A}=\bigcup_{j\geq1}\{(y,v):K_j(y,\{v\})>0\}
\]
is Borel.

\subsection{Measurability in Proposition \ref{prop_rigidity}}
We record two routine facts used in the proof of Proposition \ref{prop_rigidity}. First, the sampled vertices are Borel functions of the path. Fix $x=(a,b,z)$ and let $J_1=r(x)$. Then $X(J_1)=(b,V_1,0_\mathbb{T})$, so $V_1$ is the second vertex coordinate of $X(J_1)$. Recursively, once $J_n,V_{n-1},V_n$ are known, $J_{n+1}=J_n+l(V_{n-1},V_n)$ and $V_{n+1}$ is the second vertex coordinate of $X(J_{n+1})$. The joint evaluation map $(\omega,t)\mapsto\omega(t)$ on Skorokhod space is Borel, so this recursion makes every $V_n$ (and $J_n$) a Borel function of the entire path.

Second, let
\[
\mathcal{P}=\bigcup_{m\geq1}\mathcal{V}^m
\]
be the space of nonempty finite vertex sequences, which is a standard Borel space.
When $T\geq J_2$ almost surely, as in the proof of Proposition \ref{prop_rigidity}, the index $N\geq2$ determined by $J_N\leq T<J_{N+1}$ is measurable, and hence so is
\[
\Pi_T=(V_0,\dots,V_{N-1})\in\mathcal{P}.
\]
Thus the regular conditional distribution of $(T,\Pi_T)$ given $X(T)$ used in the proof of Proposition \ref{prop_rigidity} exists.

\subsection{Strong Markov property of the counterexample}

For $x=(a,b,z)\in\mathcal{S}$, construct the process on the product
space carrying
\[
V_1,V_2,\dots\stackrel{\mathrm{iid}}{\sim}\mu,
\qquad V_0=b,
\]
and put
\[
J_0=0,\qquad
J_1=r(x),\qquad
J_{n+1}=J_n+l(V_{n-1},V_n),\qquad n\geq1.
\]
Then $J_n\rightarrow\infty$ almost surely, and the path is determined
by $x$ and the sequence $(V_n)$. For each fixed $t$, the map from
$(x,(V_n))$ to $X(t)$ is Borel: the transition times are Borel and the
explicit path formula applies on the countable Borel partition
\[
\{J_n\leq t<J_{n+1}\},\qquad n\geq0.
\]
Since the Borel $\sigma$-field of the Skorokhod space
$D([0,\infty),\mathcal{S})$ is generated by evaluations at rational
times, the full path-construction map is Borel. Integrating over the iid
marks therefore makes $x\mapsto P_x$ a Borel probability kernel on path
space.

Fix $x\in\mathcal{S}$ for the remainder of the proof. Let
$\mathcal{N}_x$ denote the collection of all subsets of $P_x$-null
events in the underlying product space. Define
\[
\mathcal{H}_0
=
\sigma\bigl(\{\emptyset,\Omega\}\cup\mathcal{N}_x\bigr),
\qquad
\mathcal{H}_n
=
\sigma\bigl(\sigma(V_1,\dots,V_n)\cup\mathcal{N}_x\bigr),
\qquad n\geq1.
\]
Thus $(\mathcal{H}_n)$ is the completed filtration of the iid mark
sequence, using the same collection of null sets at every index. For an
integer-valued $(\mathcal{H}_n)$-stopping time $M$, let
\[
\mathcal{H}_M
=
\{A:A\cap\{M=n\}\in\mathcal{H}_n
\text{ for every }n\geq0\}.
\]
For $t\geq0$, define
\[
N_t=\max\{n\geq0:J_n\leq t\},
\qquad
\mathcal{G}_t=\mathcal{H}_{N_t}.
\]
The maximum is finite because $J_n\rightarrow\infty$. Moreover, $N_t$
is an $(\mathcal{H}_n)$-stopping time, since
\[
\{N_t\leq n\}=\{t<J_{n+1}\}\in\mathcal{H}_n.
\]
The family $(\mathcal{G}_t)$ is an increasing filtration. Indeed,
$N_s\leq N_t$ when $s\leq t$, while
$\mathcal{H}_M\subseteq\mathcal{H}_{M'}$ for integer-valued stopping
times $M\leq M'$. The process $X$ is adapted to this filtration: on
$\{N_t=n\}$, the state $X(t)$ is a Borel function of
$V_1,\dots,V_n$ and of the fixed initial state $x$. In particular, the
raw natural filtration
\[
\mathcal{F}_t^0=\sigma(X(s):0\leq s\leq t)
\]
satisfies
$
\mathcal{F}_t^0\subseteq\mathcal{G}_t.
$
By construction, each $\mathcal{G}_t$ contains $\mathcal{N}_x$.

The filtration $(\mathcal{G}_t)$ is also right-continuous. Indeed, let
$
A\in\bigcap_{m\geq1}\mathcal{G}_{t+1/m}.
$
For every $n\geq0$,
\[
A\cap\{N_t=n\}
=
\bigcup_{m\geq1}
A\cap\{N_t=n,\ J_{n+1}>t+1/m\}.
\]
On the event in the $m$-th term, $N_{t+1/m}=n$. Hence that term equals
\[
\bigl(A\cap\{N_{t+1/m}=n\}\bigr)
\cap
\{N_t=n,\ J_{n+1}>t+1/m\},
\]
which belongs to $\mathcal{H}_n$. Thus
$
A\cap\{N_t=n\}\in\mathcal{H}_n
$
for every $n$, proving $A\in\mathcal{G}_t$. The reverse inclusion
follows from the increasing property.

Now let $\tau<\infty$ be a $(\mathcal{G}_t)$-stopping time and set
\[
N=N_\tau=\max\{n\geq0:J_n\leq\tau\}.
\]
We first note that $N$ is an $(\mathcal{H}_n)$-stopping time. For every
$n$,
\[
\{N\leq n\}
=
\{\tau<J_{n+1}\}
=
\bigcup_{q\in\mathbb{Q}\cap[0,\infty)}
\{\tau\leq q\}\cap\{q<J_{n+1}\}.
\]
For fixed $q$, the event $\{\tau\leq q\}$ belongs to
$
\mathcal{G}_q=\mathcal{H}_{N_q},
$
while
$
\{q<J_{n+1}\}=\{N_q\leq n\}.
$
By the definition of the stopped sigma-field $\mathcal{H}_{N_q}$,
their intersection belongs to $\mathcal{H}_n$.

Let
\[
\mathcal{G}_\tau
=
\{A:A\cap\{\tau\leq t\}\in\mathcal{G}_t
\text{ for every }t\geq0\}
\]
denote the stopped sigma-field at $\tau$. We claim that
\begin{equation}
\label{eqn_GH}
\mathcal{G}_\tau\subseteq\mathcal{H}_N.
\end{equation}
Let
\[
\tau_m=2^{-m}\bigl(\lfloor2^m\tau\rfloor+1\bigr),
\]
so that $\tau_m\downarrow\tau$. Each $\tau_m$ is a countable-range
$(\mathcal{G}_t)$-stopping time; for every dyadic grid point
$q=k2^{-m}$,
\[
\{\tau_m\leq q\}=\{\tau<q\}\in\mathcal{G}_q.
\]
If $A\in\mathcal{G}_\tau$, then
$A\in\mathcal{G}_{\tau_m}$ because $\tau\leq\tau_m$. For every $n$,
\[
A\cap\{N=n\}
=
\bigcup_{m\geq1}
A\cap\{J_n\leq\tau,\ \tau_m<J_{n+1}\}.
\]
For $n\geq1$,
\[
\{\tau<J_n\}
=
\bigcup_{q\in\mathbb{Q}\cap[0,\infty)}
\{\tau\leq q\}\cap\{q<J_n\}
\in\mathcal{H}_{n-1}.
\]
Thus
\[
\{J_n\leq\tau\}\in\mathcal{H}_{n-1}
\subseteq\mathcal{H}_n;
\]
the case $n=0$ is immediate. Furthermore, because $\tau_m$ has
countable range,
\[
A\cap\{\tau_m<J_{n+1}\}
=
\bigcup_{q\in2^{-m}\mathbb{N}}
A\cap\{\tau_m=q\}\cap\{q<J_{n+1}\}.
\]
Here
$
A\cap\{\tau_m=q\}\in\mathcal{G}_q
=\mathcal{H}_{N_q},
$
and intersecting with
$
\{q<J_{n+1}\}=\{N_q\leq n\}
$
gives an event in $\mathcal{H}_n$. Thus every term in the preceding
union belongs to $\mathcal{H}_n$, and consequently
$
A\cap\{N=n\}\in\mathcal{H}_n
$
for every $n$. This proves \eqref{eqn_GH}.

The unused tail
$
(V_{N+1},V_{N+2},\dots)
$
has law $\mu^{\otimes\mathbb{N}}$ and is independent of
$\mathcal{H}_N$. Indeed, independence of a deterministic-index iid tail
from $\sigma(V_1,\dots,V_n)$ extends to its completion
$\mathcal{H}_n$. Hence, for $A\in\mathcal{H}_N$ and every bounded
Borel function $\varphi$ on $\mathcal{V}^{\mathbb{N}}$,
\[
\begin{aligned}
\E_x\left[
I_A\varphi(V_{N+1},V_{N+2},\dots)
\right]=&
\sum_{n=0}^{\infty}
\E_x\left[
I_{A\cap\{N=n\}}
\varphi(V_{n+1},V_{n+2},\dots)
\right]\\
=&
\sum_{n=0}^{\infty}
P_x(A\cap\{N=n\})
\int\varphi\,d\mu^{\otimes\mathbb{N}}\\
\quad=&
P_x(A)\int\varphi\,d\mu^{\otimes\mathbb{N}}.
\end{aligned}
\]
By \eqref{eqn_GH}, the unused tail is therefore independent of
$\mathcal{G}_\tau$.

Let $\Psi(y,w)$ denote the path obtained by starting from state $y$ and
using the fresh vertex sequence $w$. The Borel measurability of the
path-construction map established above implies that $\Psi$ is Borel.
Pathwise,
\[
\Theta_\tau X
=
\Psi\bigl(
X(\tau),(V_{N+1},V_{N+2},\dots)
\bigr).
\]
Since $X(\tau)$ is $\mathcal{G}_\tau$-measurable\footnote{Indeed, $X$
is adapted to $(\mathcal{G}_t)$ and has right-continuous paths, hence is
progressively measurable; therefore $X(\tau)$ is
$\mathcal{G}_\tau$-measurable.} and the unused tail is independent of
$\mathcal{G}_\tau$ with law $\mu^{\otimes\mathbb{N}}$, for every
bounded Borel function $F$ on path space,
\begin{equation}
\label{eqn_tau}
\E_x\left[
F(\Theta_\tau X)\mid\mathcal{G}_\tau
\right]
=
g(X(\tau))
\qquad P_x\text{-a.s.,}
\end{equation}
where
$
g(y)=\E_y[F(X)]
$
is Borel because $y\mapsto P_y$ is a Borel probability kernel.

It remains only to pass to the usual augmented natural filtration. Let
\[
\mathcal{F}_\infty^0
=
\sigma\left(\bigcup_{t\geq0}\mathcal{F}_t^0\right),
\]
and let $\mathcal{N}_x^X$ be the collection of all subsets of
$P_x$-null events in $\mathcal{F}_\infty^0$. The usual augmentation of
the natural filtration is
\[
\mathcal{F}_t
=
\bigcap_{s>t}
\sigma\bigl(\mathcal{F}_s^0\cup\mathcal{N}_x^X\bigr).
\]
Since
$
\mathcal{F}_s^0\subseteq\mathcal{G}_s
$, $
\mathcal{N}_x^X\subseteq\mathcal{N}_x\subseteq\mathcal{G}_s
$,
and $(\mathcal{G}_t)$ is right-continuous,
\[
\mathcal{F}_t
\subseteq
\bigcap_{s>t}\mathcal{G}_s
=
\mathcal{G}_t.
\]
Thus every finite $(\mathcal{F}_t)$-stopping time is also a finite
$(\mathcal{G}_t)$-stopping time, with
$
\mathcal{F}_\tau\subseteq\mathcal{G}_\tau.
$
Taking conditional expectations in \eqref{eqn_tau} with respect to
$\mathcal{F}_\tau$ gives
\[
\E_x\left[
F(\Theta_\tau X)\mid\mathcal{F}_\tau
\right]
=
\E_x\left[
g(X(\tau))\mid\mathcal{F}_\tau
\right]
=
g(X(\tau)),
\]
where the last equality follows because $X(\tau)$ is
$\mathcal{F}_\tau$-measurable. Therefore, $X$ is strong Markov with
respect to its usual augmented natural filtration.
\begin{appendix}
\end{appendix}
\bibliographystyle{imsart-nameyear} 
\bibliography{bibliography}       

\begin{thebibliography}{8}

\bibitem[\protect\citeauthoryear{Athreya and Ney}{1978}]{athreya1978new}
\begin{barticle}[author]
\bauthor{\bsnm{Athreya},~\bfnm{Krishna~B}\binits{K.~B.}} \AND \bauthor{\bsnm{Ney},~\bfnm{Peter}\binits{P.}}
(\byear{1978}).
\btitle{A new approach to the limit theory of recurrent Markov chains}.
\bjournal{Transactions of the American Mathematical Society}
\bvolume{245}
\bpages{493--501}.
\end{barticle}
\endbibitem

\bibitem[\protect\citeauthoryear{Glynn}{2011}]{glynn2011wide}
\begin{barticle}[author]
\bauthor{\bsnm{Glynn},~\bfnm{Peter~W}\binits{P.~W.}}
(\byear{2011}).
\btitle{Wide-sense regeneration for Harris recurrent Markov processes: an open problem}.
\bjournal{Queueing Systems}
\bvolume{68}
\bpages{305--311}.
\end{barticle}
\endbibitem

\bibitem[\protect\citeauthoryear{Harris}{1956}]{harris1956existence}
\begin{binproceedings}[author]
\bauthor{\bsnm{Harris},~\bfnm{Theodore~E}\binits{T.~E.}}
(\byear{1956}).
\btitle{The existence of stationary measures for certain Markov processes}.
In \bbooktitle{Proceedings of the Third Berkeley Symposium on Mathematical Statistics and Probability}
\bvolume{2}
\bpages{113--124}.
\end{binproceedings}
\endbibitem

\bibitem[\protect\citeauthoryear{Kechris}{1995}]{kechris1995classical}
\begin{bbook}[author]
\bauthor{\bsnm{Kechris},~\bfnm{Alexander~S.}\binits{A.~S.}}
(\byear{1995}).
\btitle{Classical Descriptive Set Theory}.
\bseries{Graduate Texts in Mathematics}
\bvolume{156}.
\bpublisher{Springer-Verlag}, \baddress{New York}.
\end{bbook}
\endbibitem

\bibitem[\protect\citeauthoryear{Meyn and Tweedie}{2012}]{meyn2012markov}
\begin{bbook}[author]
\bauthor{\bsnm{Meyn},~\bfnm{Sean~P}\binits{S.~P.}} \AND \bauthor{\bsnm{Tweedie},~\bfnm{Richard~L}\binits{R.~L.}}
(\byear{2012}).
\btitle{Markov chains and stochastic stability}.
\bpublisher{Springer Science \& Business Media}.
\end{bbook}
\endbibitem

\bibitem[\protect\citeauthoryear{Mycielski}{1964}]{mycielski1964independent}
\begin{barticle}[author]
\bauthor{\bsnm{Mycielski},~\bfnm{Jan}\binits{J.}}
(\byear{1964}).
\btitle{Independent sets in topological algebras}.
\bjournal{Fundamenta Mathematicae}
\bvolume{55}
\bpages{139--147}.
\end{barticle}
\endbibitem

\bibitem[\protect\citeauthoryear{Nummelin}{1978}]{nummelin1978splitting}
\begin{barticle}[author]
\bauthor{\bsnm{Nummelin},~\bfnm{Esa}\binits{E.}}
(\byear{1978}).
\btitle{A splitting technique for Harris recurrent Markov chains}.
\bjournal{Zeitschrift f{\"u}r Wahrscheinlichkeitstheorie und verwandte Gebiete}
\bvolume{43}
\bpages{309--318}.
\end{barticle}
\endbibitem

\bibitem[\protect\citeauthoryear{von Neumann}{1928}]{neumann1928system}
\begin{barticle}[author]
\bauthor{\bparticle{von} \bsnm{Neumann},~\bfnm{John}\binits{J.}}
(\byear{1928}).
\btitle{Ein System algebraisch unabh{\"a}ngiger zahlen}.
\bjournal{Mathematische Annalen}
\bvolume{99}
\bpages{134--141}.
\end{barticle}
\endbibitem

\end{thebibliography}


\end{document}